\numberwithin{equation}{section}
\renewcommand{\MR}[1]{} 
\theoremstyle{definition}
\newtheorem{Def}{Definition}[section]
\theoremstyle{plain}
\newtheorem{prop}[Def]{Proposition}
\newtheorem{thm}[Def]{Theorem}
\newtheorem{lem}[Def]{Lemma}
\theoremstyle{definition}
\DeclareMathOperator{\di}{d\!}
\DeclareMathOperator{\esssup}{ess\,sup}
\DeclarePairedDelimiter{\abs}{|}{|}
\DeclarePairedDelimiter{\norm}{\|}{\|}
\DeclarePairedDelimiter{\set}{\{}{\}}
\renewcommand{\phi}{\varphi}
\renewcommand{\rho}{\varrho}
\newcommand{\e}{\varepsilon}
\newcommand{\divergence}[1]{\text{div}\left({#1}\right)}
\newcommand{\R}{\mathbb{R}}
\newcommand{\N}{\mathbb{N}}
\newcommand{\Lip}{\text{Lip}} 
\newcommand{\MM}{\mathcal{M}}
\newcommand{\ws}{\!-\!w^*}
\newcommand*{\mint}[1]{%
  \mint@l{#1}{}%
}
\newcommand*{\mint@l}[2]{%
  \@ifnextchar\limits{%
    \mint@l{#1}%
  }{%
    \@ifnextchar\nolimits{%
      \mint@l{#1}%
    }{%
      \@ifnextchar\displaylimits{%
        \mint@l{#1}%
      }{%
        \mint@s{#2}{#1}%
      }%
    }%
  }%
}
\newcommand*{\mint@s}[2]{%
  \@ifnextchar_{%
    \mint@sub{#1}{#2}%
  }{%
    \@ifnextchar^{%
      \mint@sup{#1}{#2}%
    }{%
      \mint@{#1}{#2}{}{}%
    }%
  }%
}
\def\mint@sub#1#2_#3{%
  \@ifnextchar^{%
    \mint@sub@sup{#1}{#2}{#3}%
  }{%
    \mint@{#1}{#2}{#3}{}%
  }%
}
\def\mint@sup#1#2^#3{%
  \@ifnextchar_{%
    \mint@sup@sub{#1}{#2}{#3}%
  }{%
    \mint@{#1}{#2}{}{#3}%
  }%
}
\def\mint@sub@sup#1#2#3^#4{%
  \mint@{#1}{#2}{#3}{#4}%
}
\def\mint@sup@sub#1#2#3_#4{%
  \mint@{#1}{#2}{#4}{#3}%
}
\newcommand*{\mint@}[4]{%
  \mathop{}%
  \mkern-\thinmuskip
  \mathchoice{%
    \mint@@{#1}{#2}{#3}{#4}%
        \displaystyle\textstyle\scriptstyle
  }{%
    \mint@@{#1}{#2}{#3}{#4}%
        \textstyle\scriptstyle\scriptstyle
  }{%
    \mint@@{#1}{#2}{#3}{#4}%
        \scriptstyle\scriptscriptstyle\scriptscriptstyle
  }{%
    \mint@@{#1}{#2}{#3}{#4}%
        \scriptscriptstyle\scriptscriptstyle\scriptscriptstyle
  }%
  \mkern-\thinmuskip
  \int#1%
  \ifx\\#3\\\else_{#3}\fi
  \ifx\\#4\\\else^{#4}\fi  
}
\newcommand*{\mint@@}[7]{%
  \begingroup
    \sbox0{$#5\int\m@th$}%
    \sbox2{$#5\int_{}\m@th$}%
    \dimen2=\wd0 %
    \let\mint@limits=#1\relax
    \ifx\mint@limits\relax
      \sbox4{$#5\int_{\kern1sp}^{\kern1sp}\m@th$}%
      \ifdim\wd4>\wd2 %
        \let\mint@limits=\nolimits
      \else
        \let\mint@limits=\limits
      \fi
    \fi
    \ifx\mint@limits\displaylimits
      \ifx#5\displaystyle
        \let\mint@limits=\limits
      \fi
    \fi
    \ifx\mint@limits\limits
      \sbox0{$#7#3\m@th$}%
      \sbox2{$#7#4\m@th$}%
      \ifdim\wd0>\dimen2 %
        \dimen2=\wd0 %
      \fi
      \ifdim\wd2>\dimen2 %
        \dimen2=\wd2 %
      \fi
    \fi
    \rlap{%
      $#5%
        \vcenter{%
          \hbox to\dimen2{%
            \hss
            $#6{#2}\m@th$%
            \hss
          }%
        }%
      $%
    }%
  \endgroup
}
\begin{document}

\title[Lagrangian stability for a non-local CE system under Osgood condition]{Lagrangian stability for a system of non-local continuity equations under  Osgood condition}

\author[M.~Inversi]{Marco Inversi}
\address[M.~Inversi]{Department Mathematik und Informatik, Universität Basel, Spiegelgasse~1, 4051 Basel, Switzerland}
\email{marco.inversi@unibas.ch}

\author[G.~Stefani]{Giorgio Stefani}
\address[G.~Stefani]{Scuola Internazionale Superiore di Studi Avanzati (SISSA), via Bonomea~265, 34136 Trieste (TS), Italy}
\email{gstefani@sissa.it {\normalfont \it or} giorgio.stefani.math@gmail.com}

\keywords{Non-local continuity equation, Lagrangian stability, Osgood condition.}

\subjclass[2020]{Primary 35L65. Secondary 34A30.}

\thanks{
\textit{Acknowledgements}.
The authors thank Gianluca Crippa for several useful comments on a preliminary version of this work.
The first-named author is partially funded by the SNF grant FLUTURA: Fluids, Turbulence, Advection No.\ 212573. The second-named author is member of the Istituto Nazionale di Alta Matematica (INdAM), Gruppo Nazionale per l'Analisi Matematica, la Probabilità e le loro Applicazioni (GNAMPA), is partially supported by the INdAM--GNAMPA 2022 Project \textit{Analisi geometrica in strutture subriemanniane}, codice CUP\_E55\-F22\-00\-02\-70\-001, and has received funding from the European Research Council (ERC) under the European Union’s Horizon 2020 research and innovation program (grant agreement No.~945655).
}

\begin{abstract}
We extend known existence and uniqueness results of weak measure solutions for systems of non-local continuity equations beyond the usual Lipschitz  regularity.
Existence of weak measure solutions holds for uniformly continuous vector fields and convolution kernels, while uniqueness follows from a Lagrangian stability estimate under an additional Osgood condition. 
\end{abstract}

\date{\today}

\maketitle

\section{Introduction}

\subsection{Statement of the problem}

For fixed $T\in(0,+\infty)$ and $k,d\in\N$, 
we consider the system of non-local continuity equations
\begin{equation} 
\label{non-local continuity}
\left\{
\begin{array}{rcll}
\partial_t \rho^i + \divergence{\rho^i\, V^i(t,x, \rho*\eta^i)} &=& 0, & t\in(0,T),\ x\in\R^d,
\\[2ex] 
\rho^i(0) &=& \bar{\rho}^i,   & i=1, \dots, k,
\end{array}
\right.
\end{equation}
where the unknown $\rho=(\rho^1, \dots, \rho^k)\in L^\infty([0,T]; \MM^+(\R^d)^k)$ is a time-dependent $k$-vector of non-negative Borel measures on~$\R^d$, the initial datum $\bar\rho=(\bar\rho^1,\dots,\bar\rho^k)\in\MM^+(\R^d)^k$ is a $k$-vector of non-negative Borel measures,
\begin{equation}
V=(V^1,\dots,V^k)\in L^\infty([0,T];C_b(\R^d \times \R^k;\R^d)^k)
\label{V def}
\end{equation} 
is a uniformly-in-time bounded continuous $k$-vector field
and 
\begin{equation}
\eta^i=(\eta^{i,1}, \dots, \eta^{i,k})\in L^\infty([0,T];C_b(\R^d;\R^k))
\label{eta def}
\end{equation}
is a uniformly-in-time bounded continuous $k$-vector of convolution kernels, with the convolution
$\rho*\eta^i = 
(\rho^1*\eta^{i,1}, \dots, \rho^k*\eta^{i,k})$
occurring in the space variable only.
In the entire paper, we frequently consider the $1$-norm (i.e., the sum of the absolute values of the entries) on both vectors and matrices.
In particular, $|\rho|=|\rho^1|+\dots+|\rho^k|$ and thus $\|\rho\|_{\MM}=\|\rho^1\|_{\MM}+\dots+\|\rho^k\|_{\MM}$ for all $\rho\in\MM(\R^d)$. 
When considering other norms, constants depending on $d$ and/or $k$ may be dropped without notice.

Solutions of the system~\eqref{non-local continuity} are understood in the usual distributional sense, which is well-set thanks to~\eqref{V def} and~\eqref{eta def}.  

\begin{Def}[Weak solution] 
\label{weak solution}
We say that $\rho \in L^\infty([0,T]; \MM^+(\R^d)^k)$ is a \emph{weak solution} of the system \eqref{non-local continuity} starting from the initial datum $\bar\rho\in \MM^+(\R^d)^k$ if
\begin{equation}
\int_0^T\int_{\R^d} \left(
\partial_t\phi
+
V^i(t,x,\rho*\eta^i)\cdot\nabla\phi
\right)\di\rho^i(t,x)\di t
+
\int_{\R^d}\phi(0,x)\di\bar\rho^i(x)=0
\label{weak formulation}
\end{equation} 
for each $i=1, \dots,k$ and any $\phi \in C^{\infty}_c([0,T)\times\R^d)$.
\end{Def}

Any weak solution in the sense of \cref{weak solution} admits a weakly continuous representative  in duality with the space $C_0(\R^d)$ of continuous functions vanishing at infinity, see~\cite{AGS08}*{Lem.~8.1.2} and~\cites{AB08,CL13}.
So, from now on, we restrict our attention to weakly-continuous weak solutions $\rho \in C([0,T];\MM^+(\R^d)^k\ws)$ only.

The system~\eqref{non-local continuity} is used in several physical situations---for instance, pedestrian traffic, sedimentation models and supply chains---to describe the time evolution of the density of a vectorial quantity (e.g., pedestrians or particles), possibly concentrating in some points or along hypersurfaces.
Far from being complete, we refer the reader for example  to~\cites{CL12-a,CL12-n,CHM11,R90,Z99,AMRKJ06,DF13,MKB14,CMR16,KP17} for a panoramic on the related literature.
Because of the physical relevance of the system~\eqref{non-local continuity}, here we deal with non-negative solutions only.

The system~\eqref{non-local continuity} can be also interpreted in the sense of the Control Theory.
Indeed, the convolution kernel $\eta$ can be viewed as a \emph{non-local} control for the (non-linear) PDE in~\eqref{non-local continuity}. 
Therefore, assuming $V$ is fixed for simplicity, any stability result for the solutions of the system \eqref{non-local continuity} in terms of the convolution kernel~$\eta$ yields a continuous dependence of the  curve $t\mapsto\rho_t[\eta]$ solving \eqref{non-local continuity} in terms of the control~$\eta$.

The well-posedness of the system~\eqref{non-local continuity} was established in~\cite{CL13}, provided that $V$ and $\eta$ are bounded and Lipschitz continuous uniformly in time, namely,
\begin{equation}
V\in L^\infty([0,T]; \Lip_b(\R^d\times \R^k; \R^d)^k)
\quad\text{and}\quad
\eta\in  L^{\infty}([0,T]; \Lip_b(\R^d;\R^k)^k).
\label{lip ass}
\end{equation}
The crucial ingredient of~\cite{CL13} is a stability estimate (in terms of the $1$-Was\-ser\-stein distance between two solutions, see~\cite{CL13}*{Prop.~4.2}) which, in turn, allows to obtain existence and uniqueness of the solution of~\eqref{non-local continuity} via a fix point argument. 
The idea of exploiting the Lipschitz regularity to gain stability of trajectories has been later applied to several other related problems, see~\cites{PR14,CJLV16,EHM16,BS20,CDKP22} and the references therein for instance.

\subsection{Main results}

The aim of the present note is to prove the well-posedness of the system~\eqref{non-local continuity} under less restrictive assumptions than~\eqref{lip ass}, that is, to extend the existence and uniqueness result of~\cite{CL13} beyond the Lipschitz regularity.
Our interest is motivated by some recent works~\cites{BN21,ANS22,CS21,L22,AB08,LL15} dealing with non-Lipschitz velocity fields.

Our first main result deals with the existence of weak solutions of the system~\eqref{non-local continuity}, in the spirit of the celebrated Peano's Theorem. 
To this aim, we consider the following structural hypotheses (where \emph{modulus of continuity} means a non-decreasing concave function  vanishing continuously at zero):

\begin{enumerate}[label=($V$),ref=$V$]

\item\label{ass V}
The vector field $V\in L^\infty([0,T];C_b(\R^d \times \R^k;\R^d)^k)$ satisfies
\begin{equation}
\underset{t\in[0,T]}{\esssup}\,
\abs{V(t,x,u) - V(t,y,v)} \leq \omega_V(\abs{x-y} + \abs{u-v})
\quad 
\forall x,y \in \R^d,\ u,v \in \R^k,
\label{omega V}
\end{equation}
where $\omega_V\colon[0,+\infty)\to[0,+\infty)$ is a modulus of continuity.

\end{enumerate}

\begin{enumerate}[label=($\eta$),ref=$\eta$]

\item\label{ass eta}
For each $i=1,\dots,k$, the convolution kernel $\eta^i\in L^\infty([0,T];C_0(\R^d;\R^k))$
satisfies
\begin{equation}
\underset{t\in[0,T]}{\esssup}\,
\abs{\eta^i(t,x)-\eta^i(t,y)} \leq \omega_\eta(\abs{x-y})
\quad
\forall x,y\in\R^d,
\label{omega eta}
\end{equation} 
where $\omega_\eta\colon[0,+\infty)\to[0,+\infty)$ is a modulus of continuity.

\end{enumerate}

\begin{thm}[Existence] \label{existence}
If \eqref{ass V} and \eqref{ass eta} hold, then the system~\eqref{non-local continuity} admits a weak solution starting from any given initial datum in $\MM^+(\R^d)^k$. 
\end{thm}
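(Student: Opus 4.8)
The plan is to construct a solution by approximation, reducing matters to the Lipschitz well-posedness theory of~\cite{CL13}. First I would mollify the data: convolving $V$ and $\eta$ in their space (and, for $V$, also their state) variables with a standard family of smooth kernels produces $V_\e\in L^\infty([0,T];\Lip_b(\R^d\times\R^k;\R^d)^k)$ and $\eta_\e\in L^\infty([0,T];\Lip_b(\R^d;\R^k)^k)$ with $\norm{V_\e}_\infty\le\norm{V}_\infty$ and $\norm{\eta_\e}_\infty\le\norm{\eta}_\infty$, converging uniformly to $V$ and $\eta$ as $\e\to0$, and whose moduli of continuity in the spatial variables are still controlled by $\omega_V$ and $\omega_\eta$ (mollification does not enlarge a concave modulus). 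For each fixed $\e>0$ the regularised data satisfy~\eqref{lip ass}, so~\cite{CL13} furnishes a unique weak solution $\rho_\e\in C([0,T];\MM^+(\R^d)^k\ws)$ of the corresponding system with initial datum $\bar\rho$.

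The second step is to derive $\e$-uniform bounds ensuring compactness. Mass conservation gives $\norm{\rho_\e^i(t)}_\MM=\norm{\bar\rho^i}_\MM=:m^i$ for every $t\in[0,T]$. Testing the (distributional) equation against a time-independent $\phi\in C^1_b(\R^d)$ and using $\abs{V_\e^i}\le\norm{V}_\infty$ yields the finite-speed estimate
\begin{equation*}
\abs*{\int_{\R^d}\phi\di\rho_\e^i(t)-\int_{\R^d}\phi\di\rho_\e^i(s)}\le\norm{V}_\infty\,m^i\,\norm{\nabla\phi}_\infty\,\abs{t-s},
\end{equation*}
which makes the curves $t\mapsto\rho_\e^i(t)$ equicontinuous in a distance metrizing weak-$*$ convergence on bounded subsets of $\MM^+(\R^d)$, uniformly in $\e$. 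Applying the same bound to cut-off functions supported outside a large ball turns the tightness of $\bar\rho^i$ into uniform-in-$(t,\e)$ tightness of $\set{\rho_\e^i(t)}$. An Arzel\`a--Ascoli theorem for measure-valued curves then extracts a (not relabelled) subsequence converging to some $\rho\in C([0,T];\MM^+(\R^d)^k\ws)$, uniformly in time; since $\rho_\e^i(0)=\bar\rho^i$, the limit satisfies $\rho^i(0)=\bar\rho^i$.

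It remains to pass to the limit in~\eqref{weak formulation}. The time-derivative and initial-datum terms pass immediately by the uniform-in-time weak-$*$ convergence, so the only genuine difficulty is the nonlinear drift $\int_0^T\!\int_{\R^d}V_\e^i\bigl(t,x,(\rho_\e*\eta_\e^i)(t,x)\bigr)\cdot\nabla\phi\di\rho_\e^i(t,x)\di t$. Here I would first prove that $(\rho_\e*\eta_\e^i)(t,\cdot)\to(\rho*\eta^i)(t,\cdot)$ locally uniformly in $x$: for fixed $(t,x)$ one splits the difference into $\int(\eta_\e^i-\eta^i)\di\rho_\e^i$, controlled by $\norm{\eta_\e^i-\eta^i}_\infty\,m^i\to0$, and $\int\eta^i\di(\rho_\e^i-\rho^i)$, which vanishes by weak-$*$ convergence since $\eta^i\in C_0(\R^d;\R^k)$; the modulus $\omega_\eta$ in~\eqref{omega eta} makes the convolutions equicontinuous in $x$ (with modulus $\omega_\eta(\cdot)\,m^i$, uniformly in $\e,t$), upgrading this pointwise convergence to the locally uniform one. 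Continuity of $V^i$ together with $V_\e^i\to V^i$ uniformly and the uniform bound $\abs{V_\e^i}\le\norm{V}_\infty$ then yields $V_\e^i(t,x,(\rho_\e*\eta_\e^i))\to V^i(t,x,(\rho*\eta^i))$ locally uniformly in $x$. Combined with $\rho_\e^i\to\rho^i$ weakly-$*$ and the uniform tightness, this passes to the limit in the product (for each $t$, then in the time integral by dominated convergence), identifying $\rho$ as a weak solution of~\eqref{non-local continuity}. The main obstacle is precisely this last passage: controlling the merely continuous nonlinearity forces one to establish \emph{locally uniform} convergence of the convolutions, where the equicontinuity afforded by $\omega_\eta$ is the essential ingredient allowing $V^i$ to be composed and integrated against the weakly-$*$ converging measures.
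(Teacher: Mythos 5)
Your proposal is correct and follows essentially the same route as the paper: mollify $V$ and $\eta$ to invoke the Lipschitz well-posedness of~\cite{CL13}, use mass conservation plus the equi-Lipschitz-in-time bound on $t\mapsto\int\phi\di\rho_\e^i(t)$ to extract, via an Ascoli-type compactness result for measure-valued curves (which the paper proves as its Aubin--Lions-type \cref{Aubin-Lions}), a limit in $C([0,T];\MM^+(\R^d)^k\ws)$, and then pass to the limit in the drift by upgrading the weak-strong pointwise convergence of $\rho_\e*\eta_\e^i$ to locally uniform convergence through the equicontinuity supplied by $\omega_\eta$ and the assumption $\eta^i(t,\cdot)\in C_0(\R^d)$. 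Your additional uniform tightness step is correct but superfluous: since test functions are compactly supported, convergence in duality with $C_0(\R^d)$ already suffices, and the paper dispenses with it.
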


To prove \cref{existence}, we first consider the smoothed functions $V_\e$ and $\eta_\e$ and obtain a weak solution $\rho_\e$ of the corresponding system~\eqref{non-local continuity} for all $\e>0$ in virtue of the main result of~\cite{CL13}. Then, we pass to the limit as $\e\to0^+$ showing that $\rho_\e$ (weakly) converges to a weak solution of the system~\eqref{non-local continuity}. 
The needed a priori compactness is achieved via an Aubin--Lion-type Lemma which is inspired by~\cite{CS21}*{Th.~A.1}.  

In order to achieve uniqueness of weak solutions of the system~\eqref{non-local continuity}, we need to impose a further  \emph{Osgood condition} on the composition of the two moduli of continuity of $V$ and~$\eta$:

\begin{enumerate}[label=($O$),ref=$O$]

\item 
\label{ass osgood}
for each $\lambda>0$, it holds
\begin{equation*}
\int_{0+}\frac{\di r}{\omega_V(r+\lambda\,\omega_\eta(r))}=+\infty.
\end{equation*}

\end{enumerate}

For example, condition~\eqref{ass osgood} is satisfied as soon as $\omega_V\circ\omega_\eta$ is a $\log$-linear function, such as $r|\log r|$, $r\log|\log r|$ and similar, with $r>0$ sufficiently small.

Our uniqueness result deals with \emph{Lagrangian} weak solutions of the system~\eqref{non-local continuity}.

\begin{Def}[Lagrangian weak solution]
A weak solution $\rho \in L^\infty([0,T]; \MM^+(\R^d)^k)$ of the system~\eqref{non-local continuity} starting from the initial datum $\bar\rho\in \MM^+(\R^d)^k$ is \emph{Lagrangian} if 
\begin{equation*}
\rho^i(t,\cdot)=X^i(t,\cdot)_\#\bar\rho^i,
\quad
i=1,\dots,k,
\end{equation*}
where $X^i\colon[0,T]\times\R^d\to\R^d$ is the (classical) solution of the ODE 
\begin{equation}
\label{ode}
\left\{
\begin{array}{rcll}
\dfrac{\di}{\di t}\,X^i(t,x) 
&=&
V^i\big(t,X^i(t,x),
\rho*\eta^i(t,X^i(t,x))\big),
\quad
& t\in(0,T),\ x\in\R^d,
\\[2ex]
X^i(0,x) &=& x,
\quad
& x\in\R^d.
\end{array}
\right.
\end{equation}
\end{Def} 

Thanks to \cref{vector field omega} below, the Osgood condition in~\eqref{ass osgood} ensures the well-po\-sed\-ness of the ODE in~\eqref{ode}.

\begin{prop}[Associated vector field]
\label{vector field omega}
Let assumptions \eqref{ass V} and \eqref{ass eta} be in force.
If $\rho\in C([0,T]; \MM^+(\R^d)^k\ws)$ is a weak solution of the system~\eqref{non-local continuity} starting from the initial datum $\bar\rho\in \MM^+(\R^d)^k$, then the vector field 
\begin{equation}
b_{V,\eta,\rho}^i(t,x)=V^i\big(t,x,\rho*\eta^i(t,x)\big),
\quad
t\in[0,T],\ x\in\R^d,\
i=1,\dots,k,
\end{equation}
appearing in~\eqref{ode}
satisfies $b\in L^\infty([0,T];C_b(\R^d;\R^d)^k)$ with 
\begin{equation*}
\underset{t\in[0,T]}{\esssup}\,
|b_{V,\eta,\rho}(t,x)-b_{V,\eta,\rho}(t,y)|
\lesssim 
\omega_V\big(|x-y|+\|\bar\rho\|_{\MM}\,\omega_\eta(|x-y|)\big)
\quad
\forall
x,y\in\R^d.
\end{equation*}
\end{prop}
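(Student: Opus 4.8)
The plan is to establish, in order, the conservation of mass for the solution, the uniform bound on $b$, the quantitative modulus of continuity in space, and finally the measurability in time needed to place $b$ in the Bochner space $L^\infty([0,T];C_b(\R^d;\R^d)^k)$. First I would record that the total mass is conserved. Since $V\in L^\infty([0,T];C_b)$ is essentially bounded, testing the weak formulation~\eqref{weak formulation} with $\phi(t,x)=\chi(t)\,\psi_R(x)$, where $\chi\in C_c^\infty([0,T))$ and $\psi_R$ is a smooth cutoff equal to $1$ on $B_R$, supported in $B_{2R}$ and satisfying $\abs{\nabla\psi_R}\le C/R$, one sees that the convective term is bounded by $\tfrac{C}{R}\,\|V\|_\infty\,\|\rho\|_{L^\infty_t\MM}$ and hence vanishes as $R\to+\infty$. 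Using $\rho^i\ge0$ and monotone convergence, this shows that $t\mapsto\|\rho^i(t)\|_{\MM}$ weakly solves $\tfrac{\di}{\di t}\|\rho^i(t)\|_{\MM}=0$ with datum $\|\bar\rho^i\|_{\MM}$, whence $\|\rho^i(t)\|_{\MM}=\|\bar\rho^i\|_{\MM}$ for every $t\in[0,T]$ by weak continuity, and so $\|\rho(t)\|_{\MM}=\|\bar\rho\|_{\MM}$.

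The uniform bound $\esssup_{t\in[0,T]}\|b^i(t,\cdot)\|_{C_b}\le\|V\|_{L^\infty_tC_b}$ is immediate from the definition of $b$. The heart of the proof is the modulus of continuity in $x$. Writing $\rho^j*\eta^{i,j}(t,x)=\int_{\R^d}\eta^{i,j}(t,x-z)\di\rho^j(t,z)$ and noting that the increment of each component $\eta^{i,j}$ is dominated by the $1$-norm increment of $\eta^i$, hence by $\omega_\eta(\abs{x-y})$ via assumption~\eqref{ass eta}, the conservation of mass from the previous step gives
\begin{equation*}
\abs{\rho*\eta^i(t,x)-\rho*\eta^i(t,y)}
\le
\sum_{j=1}^k\int_{\R^d}\abs{\eta^{i,j}(t,x-z)-\eta^{i,j}(t,y-z)}\di\rho^j(t,z)
\le
\|\bar\rho\|_{\MM}\,\omega_\eta(\abs{x-y}).
\end{equation*}
Inserting this bound into assumption~\eqref{ass V} and exploiting that $\omega_V$ is non-decreasing yields $\abs{b^i(t,x)-b^i(t,y)}\le\omega_V(\abs{x-y}+\|\bar\rho\|_{\MM}\,\omega_\eta(\abs{x-y}))$; summing over $i=1,\dots,k$, with the factor $k$ absorbed into $\lesssim$, gives the claimed estimate. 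Since $\omega_V$ and $\omega_\eta$ vanish continuously at the origin, this also shows that each $b^i(t,\cdot)$ is uniformly continuous, so $b(t,\cdot)\in C_b(\R^d;\R^d)^k$ for a.e.\ $t$.

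The step I expect to be the most delicate is the measurability in time required to conclude that $b$ genuinely belongs to $L^\infty([0,T];C_b)$, since $V$ and $\eta$ are only essentially bounded in $t$ while $\rho$ is merely weakly-$*$ continuous. For fixed $x$, I would show that $t\mapsto\rho*\eta^i(t,x)$ is measurable by approximating $\eta^i$, which is strongly measurable because $C_0(\R^d;\R^k)$ is separable, by simple functions $\sum_\ell\mathbf{1}_{A_\ell}(t)\,g_\ell$ with $g_\ell\in C_0$: for each such term $t\mapsto\int_{\R^d} g_\ell(x-z)\di\rho^j(t,z)$ is continuous by the weak-$*$ continuity of $t\mapsto\rho(t)$, and passing to the limit preserves measurability. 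Composing with the Carath\'eodory map $V^i$ (measurable in $t$, continuous in $(x,u)$) then makes $t\mapsto b^i(t,x)$ measurable for every $x$. Finally, combining this pointwise measurability with the uniform-in-$t$ modulus of continuity obtained above, a localization via the Arzel\`a--Ascoli theorem on each ball $\bar B_R$ (where the relevant family is relatively compact, hence separably valued) together with Pettis' theorem upgrades this to strong measurability of $t\mapsto b(t,\cdot)$, yielding the claimed membership and completing the proof.
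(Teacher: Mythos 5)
Your proposal is correct and takes essentially the same route as the paper: the core of the paper's proof is exactly your estimate $\abs{\rho*\eta^i(t,x)-\rho*\eta^i(t,y)}\le\|\bar\rho\|_{\MM}\,\omega_\eta(\abs{x-y})$, obtained from mass conservation (the paper's \cref{mass}, which you re-derive with the same cutoff argument $\psi_R$) and then composed with $\omega_V$ via assumption~\eqref{ass V}. Your additional discussion of measurability in time is sound extra care that the paper leaves implicit, not a different approach.
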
 

With the above notation, our main uniqueness result reads as follows.

\begin{thm}[Uniqueness]
\label{uniqueness}
If \eqref{ass V}, \eqref{ass eta} and~\eqref{ass osgood} hold, then~\eqref{non-local continuity} admits a unique Lagrangian weak solution starting from any given initial datum in $\MM^+(\R^d)^k$. 
\end{thm}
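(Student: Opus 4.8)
The plan is to read off existence from \cref{existence} and to establish uniqueness through a Gronwall--Osgood estimate on the associated Lagrangian flows. For the existence of a \emph{Lagrangian} solution, \cref{existence} furnishes a weak solution $\rho$; by \cref{vector field omega} the field $b=b_{V,\eta,\rho}$ is bounded, continuous and has spatial modulus $\omega_V(|x-y|+\|\bar\rho\|_{\MM}\,\omega_\eta(|x-y|))$, so \eqref{ass osgood} (with $\lambda=\|\bar\rho\|_{\MM}$) makes the ODE~\eqref{ode} uniquely solvable and produces a flow $X^i$. The linear continuity equation driven by this fixed $b$ is then well-posed, and since $\rho^i$ solves it, the superposition principle forces $\rho^i(t,\cdot)=X^i(t,\cdot)_\#\bar\rho^i$; hence $\rho$ is Lagrangian. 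It remains to show that two Lagrangian weak solutions $\rho_1,\rho_2$ with the same datum $\bar\rho$, carried by flows $X_1^i,X_2^i$, coincide.

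To compare them I would track the functional
\[
\Phi(t)=\sum_{i=1}^k\int_{\R^d}\abs{X_1^i(t,x)-X_2^i(t,x)}\di\bar\rho^i(x),
\]
which is Lipschitz in time (as $b$ is bounded) and satisfies $\Phi(0)=0$. Setting $b_j^i(t,y)=V^i(t,y,\rho_j*\eta^i(t,y))$, for a.e.\ $t$ one has $\frac{\di}{\di t}\abs{X_1^i-X_2^i}\le\abs{b_1^i(t,X_1^i)-b_2^i(t,X_2^i)}$, and I would split the right-hand side as
\[
\abs{b_1^i(t,X_1^i)-b_1^i(t,X_2^i)}+\abs{b_1^i(t,X_2^i)-b_2^i(t,X_2^i)}.
\]
The first summand measures the discrepancy of one field at two base points, the second the discrepancy of the two fields (i.e.\ of $\rho_1$ and $\rho_2$ through the kernels) at a common base point.

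The first summand is bounded by \cref{vector field omega} applied to $\rho_1$, namely by $\omega_V(|X_1^i-X_2^i|+\|\bar\rho\|_{\MM}\,\omega_\eta(|X_1^i-X_2^i|))$. Because $\omega_\eta$ is concave and $\omega_V$ is concave and non-decreasing, the map $r\mapsto\omega_V(r+\lambda\,\omega_\eta(r))$ is itself concave and non-decreasing, so integrating against $\bar\rho^i$ and using Jensen's inequality (after renormalising to the probability measure $\bar\rho^i/\|\bar\rho^i\|_{\MM}$) controls this contribution by $C\,\omega_V(\Phi+\lambda'\omega_\eta(\Phi))$. For the second summand, the identity $\rho_j^m=X_j^m(t,\cdot)_\#\bar\rho^m$ gives the uniform-in-$y$ estimate
\[
\abs{(\rho_1-\rho_2)*\eta^i(t,y)}\le\sum_{m=1}^k\int_{\R^d}\omega_\eta\big(\abs{X_1^m(t,z)-X_2^m(t,z)}\big)\di\bar\rho^m(z)\le C\,\omega_\eta(\Phi(t)),
\]
again by Jensen; plugging this into the modulus~\eqref{omega V} of $V$ bounds the second summand by $\omega_V(C\,\omega_\eta(\Phi))\le\omega_V(\Phi+\lambda'\omega_\eta(\Phi))$ for $\lambda'\ge C$. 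Summing over $i$ I would arrive at the closed differential inequality
\[
\Phi'(t)\le C\,\omega_V\big(\Phi(t)+\lambda'\,\omega_\eta(\Phi(t))\big),
\]
with $C,\lambda'>0$ depending only on $\|\bar\rho\|_{\MM}$, $k$ and $d$.

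Finally, since $\Phi(0)=0$ and \eqref{ass osgood} is unaffected by the multiplicative constant $C$, the comparison ODE $y'=C\,\omega_V(y+\lambda'\omega_\eta(y))$, $y(0)=0$, admits only the trivial solution, whence $\Phi\equiv0$ by the Osgood comparison argument. Thus $X_1^i=X_2^i$ holds $\bar\rho^i$-a.e.\ for every $i$, and pushing forward gives $\rho_1=\rho_2$. I expect the crux to be closing the differential inequality: one must combine the convolution bound with the two Jensen steps so that \emph{both} error terms are absorbed into a single expression $\omega_V(\Phi+\lambda'\omega_\eta(\Phi))$, while tracking the mass-dependent constants and treating the degenerate components $\|\bar\rho^i\|_{\MM}=0$ separately; the superposition step underlying the Lagrangian representation is the other point demanding care.
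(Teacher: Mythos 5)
Your uniqueness argument is, in substance, the paper's own. The paper packages it as the stronger \cref{stability}: it differentiates the localized quantity $Q_\zeta(t)=\sum_i \mint{-}_{\R^d}|X^i-Y^i|\,\di\mu$ with $\mu=|\bar\rho|+|\bar\sigma|+\zeta\,\mathscr{L}^d$, splits the integrand into \emph{four} pieces (flow discrepancy, density discrepancy, kernel discrepancy $\eta$ vs.\ $\nu$, field discrepancy $V$ vs.\ $U$), estimates each by Jensen's inequality together with \cref{vector field omega} and \cref{mass}, and closes with the Bihari--LaSalle inequality under \eqref{ass osgood}; \cref{uniqueness} then follows by taking $U=V$, $\nu=\eta$, $\bar\sigma=\bar\rho$. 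Your $\Phi$ is exactly $Q_\zeta$ specialized to identical data, with $\mu$ replaced by $\bar\rho$, and your two summands are the paper's terms $(1)_i$ and $(2)_i$; the concavity/Jensen manipulations, the absorption $\omega_V(C\,\omega_\eta(\Phi))\le\omega_V(\Phi+\lambda'\omega_\eta(\Phi))$, and the care with the masses (including the degenerate components) all match the paper's computation. What you give up by dropping the Lebesgue part $\zeta\,\mathscr L^d$ of $\mu$ is only the localization step yielding the $L^\infty$ bound $\sup_t\|X-Y\|_{L^\infty}$ in \eqref{stability estimate}; since $\rho_j(t,\cdot)=X_j(t,\cdot)_\#\bar\rho$, coincidence of the flows $\bar\rho$-a.e.\ suffices for uniqueness, so this simplification is harmless for the statement at hand --- though it also forfeits the quantitative stability in $V$, $\eta$ and $\bar\rho$ that is the paper's actual main point.

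The one step you present as routine but should not is the existence half: the assertion that ``the superposition principle forces $\rho^i(t,\cdot)=X^i(t,\cdot)_\#\bar\rho^i$'' is precisely the uniqueness of nonnegative measure solutions of the \emph{linear} continuity equation driven by an Osgood vector field, i.e., \cite{AB08}*{Th.~1} (see also \cite{CJMO19}). The paper invokes this only in a remark, calls it ``not at all elementary,'' and deliberately states \cref{uniqueness} for Lagrangian solutions so as to avoid relying on it in the self-contained part of the argument. With the citation in place your step is legitimate (and it is the same ingredient the paper itself needs to guarantee that the solution produced by \cref{existence} is Lagrangian), but asserted without reference it is a genuine hole: nothing in your Gronwall--Osgood machinery proves well-posedness of the linear equation at the level of measures. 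An alternative, more elementary route to existence of a Lagrangian solution would be to apply the full stability estimate \eqref{stability estimate} --- stated in the paper for two \emph{different} fields and kernels precisely for this kind of flexibility --- to the mollified Lagrangian solutions of \cite{CL13} and pass to the limit in the flows; note that your specialized inequality, derived under $U=V$ and $\nu=\eta$, cannot run that approximation argument.
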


The word ``Lagrangian'' in \cref{uniqueness} can be dropped, since any weak solution of the system~\eqref{non-local continuity} is in fact Lagrangian because of~\cite{AB08}*{Th.~1} (also see~\cites{CJMO19}) and of \cref{vector field omega}. 
However, this regularity result is not at all elementary, so we prefer to state \cref{uniqueness} for Lagrangian solutions only in order to emphasize what is possible to achieve just relying on our elementary approach.


The strategy of~\cite{CL13} exploits the linearity of $\omega_\eta$ in an essential way. Indeed, the authors need the Lipschitz continuity of $\eta$ in order to recover the $1$-Wasserstein distance between two weak solutions of~\eqref{non-local continuity} in terms of its dual Kan\-to\-ro\-vich--Ru\-bin\-stein formulation (see~\cite{CL13}*{Lem.~4.1}).
We do not know if the strategy of~\cite{CL13} can be adapted to deal with a more general modulus of continuity~$\omega_\eta$.

To overcome this issue, we adopt a different point of view, which is inspired by the elementary uniqueness result achieved in the recent work~\cite{CS21}. 
Instead of controlling the $1$-Wasserstein distance between two weak solutions of the system~\eqref{non-local continuity}, we exploit their Lagrangian property to quantitatively estimate the difference between the two associated ODE flows, thus providing a \emph{Lagrangian} stability of weak solutions from which \cref{uniqueness} immediately follows.  

\begin{thm}[Lagrangian stability] \label{stability}
Let $V,U\in L^\infty([0,T];C_b(\R^d \times \R^k;\R^d)^k)$ satisfy~\eqref{omega V} with the same modulus of continuity $\omega_V$ and let $\eta,\nu\in L^\infty([0,T];C_0(\R^d;\R^k)^k)$ satisfy~\eqref{omega eta} with the same same modulus of continuity $\omega_\eta$. 
Let $\rho,\sigma\in C([0,T];\MM^+(\R^d)^k\ws)$ be two weak solutions of the system~\eqref{non-local continuity} starting from the initial data $\bar{\rho}, \bar{\sigma} \in\MM^+(\R^d)^k$, with vector fields $V,U$ and convolution kernels $\eta,\nu$, respectively.
Assume that $\rho,\sigma$ are Lagrangian, namely, 
$\rho=X(t,\cdot)_\#\bar\rho$
and 
$\sigma=Y(t,\cdot)_\#\bar\sigma$
for $t\in[0,T]$,
where $X,Y$ are the flows solving the corresponding ODEs in~\eqref{ode}.
Then, there exists a modulus of continuity $\Omega\colon[0,+\infty)\to[0,+\infty)$, only depending on
\begin{equation*}
T,\
\|\bar\rho\|_{\MM},\
\|\bar\sigma\|_{\MM},\
\|\eta\|_{L^\infty(C)},\
\|\nu\|_{L^\infty(C)},\
\omega_V,\
\omega_\eta,
\end{equation*}
such that 
\begin{equation}
\sup_{t\in[0,T]}
\|X(t,\cdot)-Y(t,\cdot)\|_{L^\infty}
\le 
\Omega\left(
\|\bar\rho-\bar\sigma\|_{\MM}
+
\|V-U\|_{L^\infty(C)}
+
\|\nu-\eta\|_{L^\infty(C)}
\right).
\label{stability estimate}
\end{equation}
\end{thm}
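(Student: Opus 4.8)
The plan is to reduce the estimate to a scalar integral inequality for the single function
\[
g(t):=\sup_{i=1,\dots,k}\ \sup_{x\in\R^d}\,\abs{X^i(t,x)-Y^i(t,x)},\qquad t\in[0,T],
\]
and then to close it by Osgood's lemma, using assumption~\eqref{ass osgood}. Since $V,U$ are uniformly-in-time bounded, both flows are uniformly Lipschitz in $t$, so $g$ is Lipschitz on $[0,T]$; crucially, $g(0)=0$ because $X^i(0,\cdot)=Y^i(0,\cdot)=\mathrm{id}$. It is precisely this vanishing initial datum, combined with the Osgood condition, that will force $g$ to be controlled by the right-hand side of~\eqref{stability estimate} and hence yield uniqueness in the limit. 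Throughout I abbreviate the source term as
\[
\delta:=\norm{\bar\rho-\bar\sigma}_{\MM}+\norm{V-U}_{L^\infty(C)}+\norm{\nu-\eta}_{L^\infty(C)}.
\]

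Fixing $i$ and $x$ and differentiating $t\mapsto X^i(t,x)-Y^i(t,x)$ along the two ODEs in~\eqref{ode}, I would bound $\tfrac{\di}{\di t}\abs{X^i-Y^i}$ by $\abs{\dot X^i-\dot Y^i}$ and split the velocity difference $V^i(t,X^i,\rho*\eta^i(t,X^i))-U^i(t,Y^i,\sigma*\nu^i(t,Y^i))$ into the ``external'' discrepancy $V^i-U^i$, bounded by $\norm{V-U}_{L^\infty(C)}\le\delta$, and the internal variation of $V^i$, controlled through~\eqref{omega V} by $\omega_V\big(\abs{X^i-Y^i}+\abs{\rho*\eta^i(t,X^i)-\sigma*\nu^i(t,Y^i)}\big)$. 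Everything then hinges on the convolution term.

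The core of the argument, and the step I expect to be the main obstacle, is the estimate of the nonlocal term $\abs{\rho*\eta^i(t,X^i)-\sigma*\nu^i(t,Y^i)}$, where two different kernels are integrated against two different measures transported by two different flows. Here I would exploit the Lagrangian structure, writing each component as an integral against the initial data,
\[
\rho^j*\eta^{i,j}(t,z)=\int_{\R^d}\eta^{i,j}\big(t,z-X^j(t,y)\big)\,\di\bar\rho^j(y),
\]
and analogously for $\sigma^j*\nu^{i,j}$ with $Y,\nu,\bar\sigma$, and then perform a threefold add-and-subtract. The resulting pieces are: the kernel discrepancy, bounded by $\norm{\eta-\nu}_{L^\infty(C)}\norm{\bar\rho}_{\MM}$; the displacement of the flows inside the fixed kernel $\nu$, which by~\eqref{omega eta} and concavity of $\omega_\eta$ is bounded by $2\,\omega_\eta(g(t))\,\norm{\bar\rho}_{\MM}$; and the difference of the initial measures tested against the bounded continuous function $y\mapsto\nu^{i,j}(t,Y^i-Y^j(t,y))$, bounded in total variation by $\norm{\nu}_{L^\infty(C)}\norm{\bar\rho-\bar\sigma}_{\MM}$. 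Summing over $j$, the nonlocal term is at most $\kappa\,\delta+2\norm{\bar\rho}_{\MM}\,\omega_\eta(g(t))$, with $\kappa$ depending only on the masses $\norm{\bar\rho}_{\MM},\norm{\bar\sigma}_{\MM}$ and on $\norm{\nu}_{L^\infty(C)}$.

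Inserting this bound and using that $\omega_V$, being concave and vanishing at $0$, is subadditive, I would peel off the source contributions and arrive, after integrating in time and taking the supremum over $(i,x)$, at
\[
g(t)\le \beta+\int_0^t \widetilde{\omega}\big(g(s)\big)\,\di s,\qquad t\in[0,T],
\]
where $\widetilde{\omega}(r):=\omega_V\big(r+2\norm{\bar\rho}_{\MM}\,\omega_\eta(r)\big)$ and $\beta:=T\big(\omega_V(\kappa\,\delta)+\delta\big)$ is itself a modulus of continuity of $\delta$ vanishing as $\delta\to0^+$. By~\eqref{ass osgood} applied with $\lambda=2\norm{\bar\rho}_{\MM}$ one has $\int_{0+}\frac{\di r}{\widetilde{\omega}(r)}=+\infty$, so Osgood's lemma applies: setting $\Psi(s):=\int_s^{1}\frac{\di r}{\widetilde{\omega}(r)}$, which is decreasing with $\Psi(0^+)=+\infty$, the comparison principle gives $g(t)\le\Psi^{-1}\big(\Psi(\beta)-t\big)\le\Psi^{-1}\big(\Psi(\beta)-T\big)$ for all $t\in[0,T]$ and $\beta$ small, and the right-hand side tends to $0$ as $\beta\to0^+$. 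Defining $\Omega(\delta):=\Psi^{-1}\big(\Psi(T(\omega_V(\kappa\,\delta)+\delta))-T\big)$, up to passing to its least concave majorant, yields a modulus of continuity with exactly the dependencies listed in the statement and establishes~\eqref{stability estimate}.
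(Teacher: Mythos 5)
Your argument is correct, and it reaches \eqref{stability estimate} by a genuinely more direct functional than the paper's. The substance of the estimate is the same: your splitting of $V^i(t,X^i,\rho*\eta^i(t,X^i))-U^i(t,Y^i,\sigma*\nu^i(t,Y^i))$ into the external discrepancy $V-U$, the kernel discrepancy $\eta-\nu$, the flow displacement inside the kernel, and the initial-measure difference reproduces exactly the paper's four terms $(1)_i$--$(4)_i$, and both proofs close the loop with the Bihari--LaSalle inequality (\cite{pachpatte}*{Th.~2.3.1}) for the Osgood modulus $r\mapsto\omega_V(r+\lambda\,\omega_\eta(r))$. The difference is the quantity being propagated: the paper does not use the sup-norm $g(t)$, but rather the localized \emph{integral} distance $Q_\zeta(t)=\sum_i \|\mu\|_{\MM}^{-1}\int_{\R^d}|X^i(t,\cdot)-Y^i(t,\cdot)|\di\mu$ with respect to the auxiliary measure $\mu=|\bar\rho|+|\bar\sigma|+\zeta\,\mathscr{L}^d$, $\zeta\ge0$, $\int\zeta=1$. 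Because $Q_\zeta$ is an average, the paper must invoke Jensen's inequality (concavity of $\omega_V,\omega_\eta$) to pull the integrals inside the moduli, and it needs a final localization step---choosing $\zeta=\zeta_{x_0,\e}$ to be mollifiers and sending $\e\to0^+$, so that $Q_{\zeta_{x_0,\e}}(t)\to|X(t,x_0)-Y(t,x_0)|$---to upgrade the integral bound to the $L^\infty$ bound. Your sup-norm route avoids both steps: since \cref{vector field omega} provides a \emph{uniform} modulus of continuity for the drift, the pointwise displacements $|X^j(t,y)-Y^j(t,y)|$ are dominated by $g(t)$ directly, no Jensen and no localization are required, and your factor $2\,\omega_\eta(g)$ correctly absorbs both the outer displacement $X^i\to Y^i$ and the inner one $X^j\to Y^j$ via $\omega_\eta(2g)\le 2\,\omega_\eta(g)$. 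What the averaged functional buys the paper is robustness: $Q_\zeta$ is the quantity of \cite{CS21}, designed for velocities admitting only an integrated modulus of continuity, where a sup-norm Gr\"onwall cannot be run; in the present setting the uniform modulus makes your more elementary argument sufficient. Two cosmetic repairs: take $\lambda=2\|\bar\rho\|_{\MM}+1$ rather than $2\|\bar\rho\|_{\MM}$ so that \eqref{ass osgood} applies even when $\bar\rho=0$ (the paper uses $\lambda=\|\bar\rho\|_{\MM}+\|\bar\sigma\|_{\MM}+1$ for the same reason), and the differentiation of $t\mapsto|X^i(t,x)-Y^i(t,x)|$ should be understood a.e.\ (or replaced by the equivalent integral formulation), since the absolute value is not differentiable at the origin; both are standard and do not affect the conclusion.
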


The modulus of continuity $\Omega$ in \cref{stability} can be explicitly computed as soon as one can invert the integral function
\begin{equation}
G_{V,\eta,\lambda}(r)
=
\int_{r_0}^r\frac{\di s}{\omega_V(s+\lambda\,\omega_\eta(s))},
\quad
r\ge0,\ r_0>0,
\label{bls}
\end{equation} 
naturally brought by the Osgood condition assumed in~\eqref{ass osgood}.
In fact, the stability estimate~\eqref{stability estimate} follows by simply differentiating a localized integral distance between the flows with respect to the time variable, and then applying the classical Bihari--LaSalle inequality (see \cite{pachpatte}*{Th.~2.3.1} for instance) with Osgood modulus of continuity $r\to\omega_V(r+\lambda\,\omega_\eta(r))$, for some specific parameter $\lambda>0$ depending on  $\|\bar\rho\|_{\MM}$ and $\|\bar\sigma\|_{\MM}$.

\cref{stability} clearly rephrases as a stability result of the flow of the ODE in~\eqref{ode}. 
From the point of view of Control Theory, the stability estimate in~\eqref{stability estimate} yields a continuous dependence of the (Lagrangian) solutions of the system \eqref{non-local continuity}, 
i.e., of the  flows induced by the corresponding ODE in~\eqref{ode}, 
in terms of the (non-local) control given by the convolution kernel, as well as of the velocity vector field and of the initial datum. 

\section{Proofs}

\subsection{Existence of weak solutions}

To prove \cref{existence}, we need some preliminary results.
We begin with an Aubin--Lions-type Lemma, which is inspired by~\cite{CS21}*{Th.~A.1}. 

\begin{lem}[Compactness] 
\label{Aubin-Lions}
Let $(\rho_n)_{n\in\N}\subset C([0,T];\MM(\R^d)\ws)$ be such that
\begin{equation}
\sup_{n\in\N}\,\norm{\rho_n}_{L^\infty(\MM)} < +\infty. \label{equibound norm}
\end{equation}
Assume that, for each $\phi \in C^\infty_c(\R^d)$, the functions $F_n[\phi]\colon[0,T]\to\R$, given by
\begin{equation*}
F_n[\phi](t)
=
\int_{\R^d}\phi\di\rho_n(t,\cdot),
\quad
t\in[0,T],
\end{equation*}
are uniformly equicontinuous on $[0,T]$, that is,
\begin{equation}
\forall\e>0\ \exists\delta>0\
:\
s,t\in[0,T],\ 
|s-t|<\delta\implies
\sup_{n\in\N}|F_n[\phi](s)-F_n[\phi](t)|<\e.
\label{equicont}
\end{equation}
Then, there exist a subsequence $(\rho_{n_k})_{k\in\N}$ and $\rho \in C([0,T], \MM(\R^d)\ws)$ such that 
\begin{equation}
\lim_{k\to+\infty}
\sup_{t\in[0,T]}\,
\abs*{\,\int_{\R^d}\phi\di\rho_{n_k}(t,\cdot)-\int_{\R^d}\phi\di\rho(t,\cdot)\,}=0
\label{al limit}
\end{equation}
for all $\phi\in C_0(\R^d)$.
\end{lem}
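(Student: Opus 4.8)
The plan is to run an Arzelà--Ascoli argument at the level of scalar test functions, combined with a diagonal extraction over a countable dense family and a density upgrade powered by the uniform total-variation bound. Set $R=\sup_{n}\norm{\rho_n}_{L^\infty(\MM)}<+\infty$, so that $\rho_n(t,\cdot)$ lies in the closed ball $\overline B_R=\set{\mu\in\MM(\R^d):\norm{\mu}_{\MM}\le R}$ for every $n$ and every $t$. Since $C_0(\R^d)$ is separable, Banach--Alaoglu ensures that $(\overline B_R,w^*)$ is compact and metrizable; this is what makes a uniform-in-time limit meaningful. The two structural inputs are: (i) the uniform equicontinuity in time assumed in~\eqref{equicont}; and (ii) the elementary equi-Lipschitz dependence on the test function,
\begin{equation*}
\sup_{t\in[0,T]}\abs{F_n[\phi](t)-F_n[\psi](t)}\le R\,\norm{\phi-\psi}_{L^\infty}
\quad\forall\,\phi,\psi\in C_0(\R^d),\ n\in\N,
\end{equation*}
which follows at once from $\norm{\rho_n(t,\cdot)}_{\MM}\le R$.

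First I would fix a countable family $\set{\phi_j}_{j\in\N}\subset C^\infty_c(\R^d)$ that is dense in $C_0(\R^d)$ for the uniform norm (standard by mollification and cut-off). For each fixed $j$, the scalar functions $t\mapsto F_n[\phi_j](t)$ are uniformly bounded, by $\abs{F_n[\phi_j](t)}\le R\,\norm{\phi_j}_{L^\infty}$, and uniformly equicontinuous by~\eqref{equicont}; hence the scalar Arzelà--Ascoli theorem yields a subsequence along which $F_n[\phi_j]$ converges uniformly on $[0,T]$. Performing a Cantor diagonal extraction over $j\in\N$, I obtain a single subsequence $(\rho_{n_k})_{k\in\N}$ such that $F_{n_k}[\phi_j]$ converges uniformly on $[0,T]$ for \emph{every} $j$.

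Next I would upgrade the convergence from the dense family to all of $C_0(\R^d)$. Given $\phi\in C_0(\R^d)$ and $\e>0$, pick $\phi_j$ with $\norm{\phi-\phi_j}_{L^\infty}<\e$; the equi-Lipschitz estimate (ii) together with the uniform Cauchy property of $(F_{n_k}[\phi_j])_k$ shows that $(F_{n_k}[\phi])_k$ is Cauchy in the $\sup_{t}$ norm, hence converges uniformly on $[0,T]$ to some limit. Defining $\scalar{\rho(t)}{\phi}:=\lim_{k}F_{n_k}[\phi](t)$ gives, for each $t$, a linear functional on $C_0(\R^d)$ satisfying $\abs{\scalar{\rho(t)}{\phi}}\le R\,\norm{\phi}_{L^\infty}$; by the Riesz representation theorem $\rho(t)\in\MM(\R^d)$ with $\norm{\rho(t)}_{\MM}\le R$. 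Since each $t\mapsto\scalar{\rho(t)}{\phi}$ is a uniform limit of the continuous functions $F_{n_k}[\phi]$, it is continuous, so $\rho\in C([0,T];\MM(\R^d)\ws)$, and the uniform-in-$t$ convergence established above is precisely~\eqref{al limit}.

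The main obstacle is the density upgrade in the third step: Arzelà--Ascoli only delivers convergence for countably many test functions, and the passage to arbitrary $\phi\in C_0(\R^d)$ must be made uniform in $t$ simultaneously. The point that makes this work is that the uniform total-variation bound~\eqref{equibound norm} furnishes the equi-Lipschitz-in-$\phi$ estimate (ii) with a constant independent of both $n$ and $t$; without such a bound the interchange of the two limits (in $k$ and in $j$) could fail. The remaining verifications---separability of $C_0(\R^d)$, density of $C^\infty_c(\R^d)$, and the Riesz identification of the limit functional---are routine.
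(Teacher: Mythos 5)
Your proof is correct and follows essentially the same route as the paper's: a diagonal Arzelà--Ascoli extraction over a countable family dense in $C_0(\R^d)$, a density upgrade powered by the uniform mass bound~\eqref{equibound norm}, and a Riesz representation of the limit functional for each $t$. The only (immaterial) difference is the order of operations---you establish that $(F_{n_k}[\phi])_k$ is uniformly Cauchy for every $\phi\in C_0(\R^d)$ before identifying the limit measure, whereas the paper first extends the limit functional from the dense set, applies Riesz, and only then proves the uniform-in-$t$ convergence for general $\phi$ via the same three-term estimate.
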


\begin{proof}
Let $\mathscr{D} \subset C_c(\R^d)$ be a countable and dense set in $C_0(\R^d)$. 
In virtue of~\eqref{equibound norm} and~\eqref{equicont}, for each $\phi \in \mathscr{D}$ the sequence $(F_n[\phi])_{n \in \N}$ is equibounded and equicontinuous on $[0,T]$. By Ascoli--Arzelà Theorem and a standard diagonal argument, we can  find a subsequence $({n_k})_{k\in\N}$ such that, for each $\phi\in\mathscr D$, the sequence $(F_{n_k}[\phi])_{k\in\N}$ is uniformly convergent to some $F[\phi]\in C([0,T])$, with
\begin{equation}
\norm{F[\phi]}_{L^\infty([0,T])}\le\norm{\phi}_{L^\infty}\,
\sup_{n\in\N}\,\norm{\rho_n}_{L^\infty(\MM)}.
\label{budino}
\end{equation}
By construction, the function $\phi\mapsto F[\phi](t)$ is a continuous linear functional on $\mathscr D$ for each $t\in[0,T]$.
Thus, for each fixed $t\in[0,T]$, we can extend the map $\phi\mapsto F[\phi](t)$ to a linear and continuous functional on $C_0(\R^d)$ for which we keep the same notation.
A plain approximation argument readily proves that, for each $\phi\in C_0(\R^d)$, the map $t\mapsto F[\phi](t)$ is continuous on $[0,T]$ and satisfies~\eqref{budino}.
By Riesz's Representation Theorem, for each $t\in[0,T]$ there exists a finite Borel measure $\rho(t,\cdot)\in\MM(\R^d)$ such that 
\begin{equation*}
F[\phi](t)=\int_{\R^d}\phi\di\rho(t,\cdot)
\quad
\text{for all}\ \phi\in C_0(\R^d),
\end{equation*}
so that $\rho\in C([0,T];\MM(\R^d)\ws)$.
Finally, in virtue of~\eqref{equibound norm} and~\eqref{budino}, for $\phi\in C_0(\R^d)$ and $\psi\in\mathscr D$, we can estimate
\begin{equation*}
\begin{split}
\sup_{t\in[0,T]}\abs{F_{n_k}[\phi](t)-F[\phi](t)}
&\le 
\sup_{t\in[0,T]}\abs{F_{n_k}[\psi](t)-F[\psi](t)}
+2\,\norm{\psi-\phi}_{L^\infty}\,\sup_{n\in\N}\,\norm{\rho_n}_{L^\infty(\MM)}
\end{split}
\end{equation*}
and the desired~\eqref{al limit} readily follows.
\end{proof}

In order to exploit \cref{Aubin-Lions}, we need the following mass preservation property for weak solutions of the system~\eqref{non-local continuity}.

\begin{lem}[Mass preservation]
\label{mass}
Let $V$ and $\eta$ be as in~\eqref{V def} and~\eqref{eta def}, respectively.
If $\rho \in C([0,T];\MM^+(\R^d)^k\ws)$ is a weak solution of the system~\eqref{non-local continuity} starting from the initial datum $\overline{\rho} \in \MM^+(\R^d)^k$, then
\begin{equation}
\norm{\rho^i(t,\cdot)}_{\MM} = \norm{\bar{\rho}^i}_{\MM} 
\quad
\label{mass is preserved}
\end{equation}
for $t\in[0,T]$ and $ i=1,\dots,k$.
\end{lem}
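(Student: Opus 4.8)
The plan is to reduce the vector statement to $k$ independent scalar identities and, for each fixed $i$, to test the weak formulation against a purely spatial cut-off, exploiting only the boundedness of $V$ to control the flux at infinity. Since each $\rho^i(t,\cdot)$ is a non-negative measure, its total variation coincides with its total mass, $\norm{\rho^i(t,\cdot)}_{\MM}=\rho^i(t,\R^d)$, so it suffices to track $t\mapsto\rho^i(t,\R^d)$. I would fix a radially non-increasing $\chi\in C^\infty_c(\R^d)$ with $\chi\equiv1$ on the unit ball and $\mathrm{supp}\,\chi\subset\set{\abs{x}\le2}$, and set $\chi_R(x)=\chi(x/R)$, so that $\chi_R\uparrow1$ pointwise as $R\to+\infty$, while $\abs{\nabla\chi_R}\le\norm{\nabla\chi}_{L^\infty}/R$ with support in the annulus $\set{R\le\abs{x}\le2R}$.

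First I would insert the product test function $\phi(t,x)=\theta(t)\,\chi_R(x)$, with $\theta\in C^\infty_c([0,T))$, into the weak formulation~\eqref{weak formulation}. Choosing a sequence $\theta_n$ with $\theta_n(0)=1$ converging to the indicator $\mathbf 1_{[0,t]}$ (so that $\theta_n'$ concentrates at $t$), and using the weak-$*$ continuity of $\rho^i$ to pass to the limit in the term carrying $\chi_R$ together with dominated convergence in the flux term, I would obtain, for \emph{every} $t\in[0,T]$, the integrated identity
\begin{equation*}
\int_{\R^d}\chi_R\di\rho^i(t,\cdot)-\int_{\R^d}\chi_R\di\bar\rho^i
=
\int_0^t\int_{\R^d}V^i(s,x,\rho*\eta^i)\cdot\nabla\chi_R\di\rho^i(s,\cdot)\di s.
\end{equation*}

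The key estimate is on the right-hand side. Since~\eqref{V def} forces $\abs{V^i(s,x,\rho*\eta^i)}\le\norm{V}_{L^\infty(C)}$ and $\abs{\nabla\chi_R}\le\norm{\nabla\chi}_{L^\infty}/R$, the flux term is bounded by
\begin{equation*}
\frac{\norm{\nabla\chi}_{L^\infty}\,\norm{V}_{L^\infty(C)}}{R}\int_0^T\norm{\rho^i(s,\cdot)}_{\MM}\di s
\le
\frac{C\,T}{R}\,\underset{s\in[0,T]}{\esssup}\,\norm{\rho^i(s,\cdot)}_{\MM},
\end{equation*}
which tends to $0$ as $R\to+\infty$, uniformly in $t$ (here the $L^1$-in-time mass bound coming from $\rho\in L^\infty([0,T];\MM^+(\R^d)^k)$ is all that is needed, so there is no circularity). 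Letting $R\to+\infty$ for each fixed $t$ and invoking monotone convergence on the left-hand side---where the non-negativity of $\rho^i$ and $\bar\rho^i$ is essential to pass from $\int\chi_R\di\rho^i(t,\cdot)$ to $\norm{\rho^i(t,\cdot)}_{\MM}$---yields $\norm{\rho^i(t,\cdot)}_{\MM}=\norm{\bar\rho^i}_{\MM}$ for every $t\in[0,T]$.

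I expect the only genuine obstacle to be the vanishing of the flux across the spheres of radius $R$, i.e.\ ruling out loss of mass at infinity; this is precisely where the boundedness of $V$ (a manifestation of finite propagation speed) enters, through the decay $\nabla\chi_R=O(1/R)$ tested against a finite total mass. The remaining point requiring care is the justification of the integrated identity from the purely distributional formulation~\eqref{weak formulation}, which is a routine approximation in the time variable; I emphasize that the conclusion holds for \emph{every} $t$, and not merely for almost every $t$, because the flux estimate above is uniform in $t$ while the left-hand side converges pointwise in $t$ by monotone convergence.
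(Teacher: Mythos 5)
Your proposal is correct and follows essentially the same route as the paper: both separate variables in the test function, derive from the distributional formulation and the weak-$*$ continuity of $\rho^i$ an integrated identity valid for \emph{every} $t\in[0,T]$, and then conclude by testing with cut-offs $\chi_R$ whose gradients are $O(1/R)$, so that the flux term vanishes while monotone convergence recovers the total masses on both sides. The only cosmetic difference is that you approximate $\mathbf 1_{[0,t]}$ in time directly, whereas the paper first deduces that $t\mapsto\int\beta\di\rho^i(t,\cdot)$ is absolutely continuous with the same integral representation; the content is identical.
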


\begin{proof}
Let $i \in\set*{ 1, \dots, k}$ be fixed.
By applying~\eqref{weak formulation} to the test function $\phi(t,x)=\alpha(t)\,\beta(x)$, $(t,x)\in[0,T]\times\R^d$, where $\alpha\in C^\infty_c([0,T))$ and $\beta\in C^\infty_c(\R^d)$, we get
\begin{equation*}
\int_0^T\int_{\R^d} \left(
\alpha'\beta
+
\alpha\,V^i(t,x,\rho*\eta^i)\cdot\nabla\beta
\right)\di\rho^i(t,\cdot)\di t+
\alpha(0)\int_{\R^d}\beta(x)\di\bar\rho^i=0.
\end{equation*}
Since $\alpha\in C^\infty_c([0,T))$ is arbitrary and $\rho \in C([0,T];\MM^+(\R^d)^k\ws)$, we infer that 
\begin{equation}
t\mapsto
\int_{\R^d}\beta\di\rho^i(t,\cdot)
\in AC^{1,1}([0,T];\R)
\label{beta def ac}
\end{equation}
with
\begin{equation}
\int_{\R^d}\beta\di\rho^i(t,\cdot)
=
\int_{\R^d}\beta\di\bar\rho^i
+
\int_0^t
\int_{\R^d}V^i(s,\cdot,\rho*\eta^i)\cdot\nabla\beta
\di\rho^i(s,\cdot)
\di s
\label{beta sobolev}
\end{equation}
for all $t\in[0,T]$.
Now let $t \in [0,T]$ be fixed. 
We let $(\beta_R)_{R>0}\subset C^\infty_c(\R^d)$ be such that 
\begin{equation*}
\beta_R\ge0,
\quad
\mathrm{supp}\,\beta_R\subset B_{2R}, \quad
\beta_R=1\ \text{on}\ B_R,
\quad
\|\nabla\beta_R\|_{L^\infty}
\le\frac2R
\end{equation*}
for all $R>0$. 
By the Monotone Convergence Theorem, we infer that 
\begin{equation*}
\lim_{R\to+\infty}
\int_{\R^d}\beta_R\di\rho^i(t,\cdot)
=
\|\rho^i(t,\cdot)\|_{\MM}
\end{equation*}
as well as
\begin{equation*}
\lim_{R\to+\infty}
\int_{\R^d}\beta_R\di\bar\rho^i
=
\|\bar\rho^i\|_{\MM}.
\end{equation*}
Since 
\begin{equation*}
\abs*{\,\int_0^t
\int_{\R^d}V^i(s,\cdot,\rho*\eta^i)\cdot\nabla\beta_R
\di\rho^i(s,\cdot)
\di s \,}
\le 
\frac2R
\,
\norm{\rho^i}_{L^\infty(\MM)}
\,
\norm{V^i}_{L^\infty(C)}
\end{equation*}
for all $R>0$, we get~\eqref{mass is preserved} by applying~\eqref{beta sobolev} to $\beta_R$ and passing to the limit as $R\to+\infty$.
\end{proof}

We are ready to prove our existence result.

\begin{proof}[Proof of \cref{existence}] 
Let $(\ell_\e)_{\e>0}\subset C^{\infty}_c (\R^{d+k})$ and $(\jmath_\e)_{\e>0}\in C^\infty_c (\R^d)$ be two families of standard non-negative mollifiers and set
\begin{equation*}
V_\e^{i,j}(t,\cdot) = V^{i,j}(t,\cdot) * \ell_\e , 
\quad
\eta_\e^{i,j} = \eta^{i,j}(t,\cdot) * \jmath_\e,
\end{equation*}
where in both cases the (component-wise) convolution occur in the spatial variables only. 
Since $V_\e$ and $\eta_\e$ clearly satisfy the Lipschitz property~\eqref{lip ass} for each $\e>0$, by~\cite{CL13}*{Th.~1.1} there exists a weak solution 
\begin{equation*}
\rho_\e \in C([0,T],\MM^+(\R^d)^k\ws)
\end{equation*} 
of the system~\eqref{non-local continuity} starting from the initial datum $\bar{\rho} \in \MM^+(\R^d)^k$, so that 
\begin{equation}
\int_0^T\int_{\R^d} \left(
\partial_t \phi
+
V^i_\e(t,\cdot,\rho_\e*\eta^i_\e)\cdot\nabla\phi
\right)\di\rho^i_\e(t,\cdot)\di t+\int_{\R^d}\phi(0,\cdot)\di\bar\rho^i=0
\label{approximated weak}
\end{equation} 
for each $i=1, \dots,k$ and $\e>0$ and $\phi \in C^{\infty}_c([0,T)\times\R^d)$.

Now let $i \in \set{1,\dots, k}$ be fixed. 
We claim that (any sequence in) the family $(\rho^i_\e)_{\e>0}$ satisfies the assumptions~\eqref{equibound norm} and~\eqref{equicont} of \cref{Aubin-Lions}. 
Indeed, from \cref{mass} we get 
\begin{equation}
\|\rho_\e^i(t,\cdot)\|_{\MM}
=
\|\bar\rho^i\|_{\MM}
\label{mass preserv eps}
\end{equation} 
for all $t\in[0,T]$ and $\e>0$, from which~\eqref{equibound norm}  immediately follows.
To prove~\eqref{equicont}, we simply argue as in the proof of \cref{mass}.
Recalling~\eqref{beta def ac} and~\eqref{beta sobolev}, we easily recognize that the time derivative of the function
\begin{equation}
F_\e[\beta](t)
=
\int_{\R^d}\beta(\cdot)\di\rho^i_\e(t,\cdot),
\quad
t\in[0,T],
\label{beta eps def}
\end{equation}
is bounded by
\begin{equation*}
\begin{split}
\abs*{\,
\int_{\R^d}V^i_\e(t,x,\rho_\e*\eta^i_\e)\cdot\nabla\beta
\di\rho^i_\e(t,x)\,}
&\le 
\norm{V^i}_{L^\infty(C)}
\,
\|\nabla\beta\|_{L^\infty}
\,
\norm{\bar\rho^i}_{\MM}
\end{split}
\end{equation*}
for a.e.\ $t\in[0,T]$ and for each $\e>0$.
In particular, the family $(F_\e[\beta])_{\e>0}$ in~\eqref{beta eps def} is equi-Lipschitz and thus satisfies~\eqref{equicont}.
Therefore, by \cref{Aubin-Lions}, we find a sequence $(\rho_{\e_n})_{n\in\N} \subset C([0,T]; \MM^+(\R^d)^k\ws)$ and $\rho\in C([0,T]; \MM^+(\R^d)^k\ws)$ such that 
\begin{equation}
\lim_{n\to+\infty}
\sup_{t\in[0,T]}\,
\abs*{\,\int_{\R^d}\beta\di\rho_{\e_n}(t,\cdot)-\int_{\R^d}\beta\di\rho(t,\cdot)\,}=0
\label{conv eps_n}
\end{equation}
for all $\beta \in C_0(\R^d)$. 

To conclude, we just need to prove that $\rho$ is a weak solution of~\eqref{non-local continuity} starting from the initial datum $\bar{\rho}$.
We do so by passing to the limit in~\eqref{approximated weak} along $(\e_n)_{n\in\N}$ as $n\to+\infty$ for each given  $\phi \in C^\infty_c([0, +\infty) \times \R^d)$.
Indeed, on the one side, since 
\begin{equation*}
\lim_{n\to+\infty}
\int_{\R^d}
\partial_t \phi
\di\rho^i_{\e_n}(t,\cdot)
=
\int_{\R^d}
\partial_t \phi
\di\rho^i(t,\cdot)
\end{equation*}
because of~\eqref{conv eps_n} and 
\begin{equation*}
\abs*{
\,
\int_{\R^d}
\partial_t \phi
\di\rho^i_{\e_n}(t,\cdot)
\,
} 
\le
\norm{\partial_t\phi}_{L^\infty}
\,
\norm{\bar\rho^i}_{\MM}
\end{equation*}
because of~\eqref{mass preserv eps}, for all $t\in[0,T]$,
by the Dominated Convergence Theorem we infer that 
\begin{equation}
\lim_{n\to+\infty}
\int_0^T\int_{\R^d}
\partial_t\phi
\di\rho^i_{\e_n}(t,\cdot)
\di t
=
\int_0^T\int_{\R^d}
\partial_t\phi
\di\rho^i(t,\cdot)
\di t.
\label{first piece}
\end{equation} 
On the other side, since $\eta^i(t,\cdot)\in C_0(\R^d)$ in virtue of the assumption~\eqref{ass eta}, we have that $\eta^i_{\e_n}(t,\cdot)\to\eta^i(t,\cdot)$ in $C_0(\R^d)$ as $n\to+\infty$, so that
\begin{equation}
\begin{split}
\lim_{n\to+\infty}
\big(
\rho_{\e_n}(t,\cdot)*\eta^i_{\e_n}(t,\cdot)
\big)(x)
&=
\lim_{n\to+\infty}
\int_{\R^d}
\eta^i_{\e_n}(t,x-y)
\di\rho_{\e_n}(t,y)
\\
&=
\int_{\R^d}
\eta^i(t,x-y)
\di\rho(t,y)
=
\big(
\rho(t,\cdot)*\eta^i(t,\cdot)
\big)(x)
\label{pointw rho eta n}
\end{split}
\end{equation}
for each $x\in\R^d$ and all $t\in[0,T]$ as a weak-strong convergent pair, due to~\eqref{conv eps_n}.
Moreover, again in virtue of~ \eqref{mass preserv eps} and~\eqref{ass eta}, we can estimate
\begin{equation*}
\norm{
\rho_{\e_n}(t,\cdot)*\eta^i_{\e_n}(t,\cdot)
}
\le 
\norm{\overline{\rho}^i}_{\MM}
\,
\norm{\eta^i}_{L^\infty(C)}
\end{equation*}
and 
\begin{equation*}
\begin{split}
\big|
\big(
\rho_{\e_n}(t,\cdot)*\eta^i_{\e_n}&(t,\cdot)
\big)(x)
-
\big(
\rho_{\e_n}(t,\cdot)*\eta^i_{\e_n}(t,\cdot)
\big)(y)
\big|
\\
&\le 
\int_{\R^d}
\abs*{\eta^i_{\e_n}(t,x-\cdot)-\eta^i_{\e_n}(t,y-\cdot)}
\di\rho_{\e_n}(t,\cdot)
\le 
\omega_\eta(|x-y|)
\,
\norm{\overline{\rho}^i}_{\MM}
\end{split}
\end{equation*}
for all $n\in\N$ and $t\in[0,T]$.
By Arzelà--Ascoli's Theorem, we thus get that the pointwise convergence in~\eqref{pointw rho eta n} must be uniform on compact sets in $\R^d$, uniformly in $t\in[0,T]$. 
An analogous argument relying on the assumption~\eqref{ass V} proves that also $V^i_{\e_n}(t,\cdot)\to V^i(t,\cdot)$ as $n\to+\infty$ uniformly on compact sets in $\R^d$, uniformly in $t\in[0,T]$.
Again by~\eqref{conv eps_n}, by weak-strong convergence and by the Dominated Convergence Theorem, we hence get 
\begin{equation}
\lim_{n\to+\infty}
\int_0^T\int_{\R^d} 
V^i_{\e_n}(t,\cdot,\rho_{\e_n}*\eta^i_{\e_n})\cdot\nabla\phi
\di\rho^i_{\e_n}(t,\cdot)\di t
=
\int_0^T\int_{\R^d} 
V^i(t,\cdot,\rho*\eta^i)\cdot\nabla\phi
\di\rho^i(t,\cdot)\di t. 
\label{second piece}
\end{equation}
Thus, the conclusion follows by combining~\eqref{first piece} with~\eqref{second piece}. 
\end{proof}

\subsection{Lagrangian stability}

We deal with the  Lagrangian stability of weak solutions. We begin with the proof of \cref{vector field omega}.

\begin{proof}[Proof of \cref{vector field omega}]
Let $t\in[0,T]$ be fixed.
Given $x,y\in\R^d$ and $i\in\set*{1,\dots,k}$, in virtue of assumption~\eqref{ass eta} and of \cref{mass}, we can estimate
\begin{equation*}
\begin{split}
|\rho*\eta^i(t,x)-\rho*\eta^i(t,y)|
&\le 
\sum_{j=1}^k
\int_{\R^d}|\eta^{i,j}(t,x-z)-\eta^{i,j}(t,y-z)|\di\rho^j(t,z)
\\
&\le 
\sum_{j=1}^k
\int_{\R^d}\omega_\eta(|x-y|)\di\rho^k(t,z)
=
\|\rho(t,\cdot)\|_{\MM}\,\omega_\eta(|x-y|)
\\
&= 
\|\bar\rho\|_{\MM}\,\omega_\eta(|x-y|).
\end{split}
\end{equation*}
Thus, thanks to assumption~\eqref{ass V}, we get that 
\begin{equation*}
\begin{split}
\abs*{
V^i\big(t,x,\rho*\eta^i(t,x)\big)
-
V^i\big(t,y,\rho*\eta^i(t,y)\big)
}
&\le 
\omega_V\big(|x-y|+|\rho*\eta^i(t,x)-\rho*\eta^i(t,y)
|\big)
\\
&\le
\omega_V\big(
|x-y|+\|\bar\rho\|_{\MM}\,\omega_\eta(|x-y|)
\big)
\end{split}
\end{equation*}
and the conclusion immediately follows.
\end{proof}

We conclude our paper with the proof of \cref{stability}.

\begin{proof}[Proof of \cref{stability}] 
Let $V,U$, $\eta,\nu$, $\bar{\rho}, \bar{\sigma}$, $X,Y$ and $\rho,\sigma$ be as in the statement.
Fix $\zeta\in C(\R^d)$ with $\zeta\ge0$ and $\int_{\R^d} \zeta(x) \, \di x = 1$. Letting $\mu\in\MM^+(\R^d)$ be defined by
$\mu=|\bar\rho|+|\bar\sigma|+\zeta\,\mathscr L^d$, we consider the quantity
\begin{equation*}
Q_{\zeta}(t)
=
\sum_{i=1}^k
\mint{-}_{\R^d}
|X^i(t,\cdot)-Y^i(t,\cdot)|\di\mu
\end{equation*}
for all $t\in[0,T]$.
Note that $t\mapsto Q_\zeta(t)$ is well defined and Lipschitz, with $Q_\zeta(0)=0$ and  
\begin{equation*}
|Q_\zeta(s)-Q_\zeta(t)|
\le 
k\,(\|U\|_{L^\infty(C)}+\|V\|_{L^\infty(C)})
\,
|s-t|
\end{equation*}
for all $s,t\in[0,T]$.
Therefore, for a.e.\ $t\in[0,T]$, we can write
\begin{equation*}
\begin{split}
Q_\zeta'(t) 
& 
\leq 
\sum_{i=1}^k
\mint{-}_{\R^d} |V^i(t,X^i, \rho *\eta^i(t,X^i))
- U^i(t,Y^i, \sigma*\nu^i (t,Y^i))|\di\mu
\\
&\le
\sum_{i=1}^k\  
(1)_i+(2)_i+(3)_i+(4)_i,
\end{split}
\end{equation*}
where (dropping the variables of $X$ and $Y$ for notational convenience)
\begin{equation*}
\begin{split}
(1)_i
&=
\mint{-}_{\R^d} 
|V^i(t,X^i, \rho *\eta^i(t,X^i))
- V^i(t,Y^i,\rho *\eta^i (t,Y^i))|
\di\mu,
\\
(2)_i
&=
\mint{-}_{\R^d} 
|V^i(t,Y^i,\rho *\eta^i (t,Y^i))-V^i(t,Y^i,\sigma*\eta^i (t,Y^i))|
\di\mu,
\\
(3)_i
&=
\mint{-}_{\R^d} |V^i(t,Y^i,\sigma*\eta^i (t,Y^i))-V^i(t,Y^i,\sigma*\nu^i (t,Y^i))|
\di\mu,
\\
(4)_i
&=
\mint{-}_{\R^d} |V^i(t,Y^i,\sigma*\nu^i (t,Y^i))-U^i(t,Y^i,\sigma*\nu^i (t,Y^i))|
\di\mu.
\end{split}
\end{equation*}
We now estimate each term separately at a given $t\in[0,T]$.
By \cref{vector field omega} and Jensen's inequality, we can easily estimate the first term as
\begin{equation*}
\begin{split}
(1)_i
&\le 
\mint{-}_{\R^d} 
\omega_V\big(|X^i-Y^i|+\|\bar\rho\|_{\MM}\,\omega_\eta(|X^i-Y^i|)\big)
\di\mu
\\
&\le  
\omega_V\left(
\mint{-}_{\R^d}
|X^i-Y^i|\di\mu
+
\|\bar\rho\|_{\MM}\,\omega_\eta\left(\mint{-}_{\R^d}
|X^i-Y^i|\di\mu\right)
\right)
\\
&\le 
\omega_V\big(Q_\zeta(t)+\|\mu\|_{\MM}\,\omega_\eta(Q_\zeta(t))\big).
\end{split}
\end{equation*}
Concerning the second term, since 
\begin{equation*}
\begin{split}
|(\rho-\sigma)&*\eta^i(t,x)|
=
\abs*{
\int_{\R^d}\eta^i(t,x-y)\di\,(X_\#\bar\rho(y)-Y_\#\bar\sigma(y))
}
\\
&\le
\sum_{j=1}^k
\int_{\R^d}|\eta^{i,j}(t,x-X^j)-\eta^{i,j}(t,x-Y^j)| 
\di\bar\rho^j
+
\int_{\R^d}|\eta^{i,j}(t,x-Y^j)| 
\di|\bar\rho^j-\bar\sigma^j|
\\
&\le 
\sum_{j=1}^k
\int_{\R^d}\omega_\eta(|X^j-Y^j|) 
\di\bar\rho^j
+
\|\eta\|_{L^\infty(C)} 
\|\bar\rho^j-\bar\sigma^j\|_{\MM}
\\
&\le
\int_{\R^d}\omega_\eta\left(\sum_{j=1}^k|X^j-Y^j|\right) 
\di|\bar\rho|
+
\|\eta\|_{L^\infty(C)} 
\|\bar\rho-\bar\sigma\|_{\MM}
\end{split}
\end{equation*}
for all $x\in\R^d$, again by Jensen's inequality we get 
\begin{equation*}
\begin{split}
(2)_i
&\le 
\mint{-}_{\R^d} 
\omega_V\big(|(\rho-\sigma)*\eta^i(t,Y^i)|\big)
\di\mu
\\
&\le
\omega_V\left(
\int_{\R^d}\omega_\eta\left(\sum_{i=1}^k|X^i-Y^i|\right)
\di|\bar\rho|
+
\|\eta\|_{L^\infty(C)} 
\|\bar\rho-\bar\sigma\|_{\MM} 
\right)
\\
&\le 
\omega_V\left(
\|\mu\|_{\MM}\,\omega_\eta(Q_\zeta(t)) 
+
\|\eta\|_{L^\infty(C)} 
\|\bar\rho-\bar\sigma\|_{\MM} 
\right)
\\
&\le 
\omega_V\big(Q_\zeta(t)+
\|\mu\|_{\MM}\,\omega_\eta(Q_\zeta(t))
\big) 
+
\omega_V\left(
\|\eta\|_{L^\infty(C)} 
\|\bar\rho-\bar\sigma\|_{\MM} 
\right).
\end{split}
\end{equation*}
The last two terms can be trivially estimated as
\begin{equation*}
\begin{split}
(3)_i
&\le 
\omega_V\big(\|\sigma\|_{L^\infty( \MM)}\,\|\eta-\nu\|_{L^\infty(C)}\big)
=
\omega_V\big(\|\bar\sigma\|_{\MM}\,\|\eta-\nu\|_{L^\infty(C)}\big)
\\
&\le 
\omega_V\big(\|\mu\|_{\MM}\,\|\eta-\nu\|_{L^\infty(C)}\big)
\end{split}
\end{equation*}
thanks to \cref{mass}, and 
\begin{equation*}
(4)_i
\le 
\|V-U\|_{L^\infty(C)}.
\end{equation*}
Putting everything altogether, we conclude that 
\begin{equation*}
Q_\zeta'(t)
\lesssim 
\omega_V\big(Q_\zeta(t)+\lambda \,\omega_\eta(Q_\zeta(t))
\big) 
+
M,
\end{equation*}
where 
$\lambda = \|\overline{\rho}\|_{\MM} + \| \overline{\sigma}\|_{\MM} + 1$
and 
\begin{equation*}
\begin{split}
M
&=
\omega_V\left(
\|\eta\|_{L^\infty(C)} 
\|\bar\rho-\bar\sigma\|_{\MM} 
\right)
+
\omega_V\big(\lambda\,\|\eta-\nu\|_{L^\infty(C))}\big)
+
\|V-U\|_{L^\infty(C)}.
\end{split}
\end{equation*}
At this point, we just need to recall the Osgood condition assumed in~\eqref{ass osgood} and the integral function in~\eqref{bls}.
Indeed, by the classical Bihari--LaSalle inequality (see \cite{pachpatte}*{Th.~2.3.1} for instance), we find a modulus of continuity $\Omega\colon[0,+\infty)\to[0,+\infty)$, only depending on
\begin{equation*}
T,\
\|\bar\rho\|_{\MM},\
\|\bar\sigma\|_{\MM},\
\|\eta\|_{L^\infty(C)},\
\|\nu\|_{L^\infty(C)},\
\omega_V,\
\omega_\eta,
\end{equation*}
such that 
\begin{equation}
\sup_{t\in[0,T]} Q_\zeta(t) 
\le 
\Omega\left(
\|\bar\rho-\bar\sigma\|_{\MM}
+
\|V-U\|_{L^\infty(C)}
+
\|\nu-\eta\|_{L^\infty(C)}
\right).
\label{stability estimate 1}
\end{equation}
We remark that $\Omega$ is independent of $\zeta$, as long as we choose $\zeta\ge0$ and $\| \zeta\|_{L^1} =1$. To conclude, we choose a family $(\zeta_{x_0,\e})_{\e>0}$ of standard mollifiers around $x_0 \in \R^d$. Since the flows $X(t,\cdot), Y(t,\cdot)$ are continuous maps, we deduce that 
\begin{equation}
 \lim_{\e \to 0^+} Q_{\zeta_{x_0, \e}} (t) = \abs{X(t,x_0)- Y(t, x_0)}. 
\label{stability estimate 2}
\end{equation}
Thus,~\eqref{stability estimate} follows from~\eqref{stability estimate 1} and~\eqref{stability estimate 2} and the proof is complete. 
\end{proof}


\begin{bibdiv}
\begin{biblist}

\bib{AB08}{article}{
      author={Ambrosio, Luigi},
      author={Bernard, Patrick},
       title={Uniqueness of signed measures solving the continuity equation for
  {O}sgood vector fields},
        date={2008},
        ISSN={1120-6330},
     journal={Atti Accad. Naz. Lincei Rend. Lincei Mat. Appl.},
      volume={19},
      number={3},
       pages={237\ndash 245},
         url={https://doi.org/10.4171/RLM/522},
      review={\MR{2439520}},
}

\bib{AGS08}{book}{
      author={Ambrosio, Luigi},
      author={Gigli, Nicola},
      author={Savar\'{e}, Giuseppe},
       title={Gradient flows in metric spaces and in the space of probability
  measures},
     edition={Second},
      series={Lectures in Mathematics ETH Z\"{u}rich},
   publisher={Birkh\"{a}user Verlag, Basel},
        date={2008},
        ISBN={978-3-7643-8721-1},
      review={\MR{2401600}},
}

\bib{ANS22}{article}{
      author={Ambrosio, Luigi},
      author={Nicolussi~Golo, Sebastiano},
      author={Serra~Cassano, Francesco},
       title={Classical flows of vector fields with exponential or
  sub-exponential summability},
        date={2022},
        note={Preprint, available at
  \href{https://arxiv.org/abs/2208.01381}{arXiv:2208.01381}},
}

\bib{AMRKJ06}{article}{
      author={Armbruster, D.},
      author={Marthaler, D.},
      author={Ringhofer, C.},
      author={Kempf, K.},
      author={Jo, T.},
       title={A continuum model for a re-entrant factory},
        date={2006},
     journal={Oper. Res.},
      volume={54},
      number={5},
       pages={933\ndash 950},
}

\bib{BS20}{article}{
      author={Bressan, Alberto},
      author={Shen, Wen},
       title={On traffic flow with nonlocal flux: a relaxation representation},
        date={2020},
        ISSN={0003-9527},
     journal={Arch. Ration. Mech. Anal.},
      volume={237},
      number={3},
       pages={1213\ndash 1236},
         url={https://doi.org/10.1007/s00205-020-01529-z},
      review={\MR{4110434}},
}

\bib{BN21}{article}{
      author={Bru\'{e}, Elia},
      author={Nguyen, Quoc-Hung},
       title={Sobolev estimates for solutions of the transport equation and
  {ODE} flows associated to non-{L}ipschitz drifts},
        date={2021},
        ISSN={0025-5831},
     journal={Math. Ann.},
      volume={380},
      number={1-2},
       pages={855\ndash 883},
         url={https://doi.org/10.1007/s00208-020-01988-5},
      review={\MR{4263701}},
}

\bib{CJLV16}{article}{
      author={Carrillo, J.~A.},
      author={James, F.},
      author={Lagouti\`ere, F.},
      author={Vauchelet, N.},
       title={The {F}ilippov characteristic flow for the aggregation equation
  with mildly singular potentials},
        date={2016},
        ISSN={0022-0396},
     journal={J. Differential Equations},
      volume={260},
      number={1},
       pages={304\ndash 338},
         url={https://doi.org/10.1016/j.jde.2015.08.048},
      review={\MR{3411674}},
}

\bib{CJMO19}{article}{
      author={Clop, Albert},
      author={Jylh\"{a}, Heikki},
      author={Mateu, Joan},
      author={Orobitg, Joan},
       title={Well-posedness for the continuity equation for vector fields with
  suitable modulus of continuity},
        date={2019},
        ISSN={0022-1236},
     journal={J. Funct. Anal.},
      volume={276},
      number={1},
       pages={45\ndash 77},
         url={https://doi.org/10.1016/j.jfa.2018.10.001},
      review={\MR{3906270}},
}

\bib{CDKP22}{article}{
      author={Coclite, Giuseppe~Maria},
      author={De~Nitti, Nicola},
      author={Keimer, Alexander},
      author={Pflug, Lukas},
       title={On existence and uniqueness of weak solutions to nonlocal
  conservation laws with {BV} kernels},
        date={2022},
        ISSN={0044-2275},
     journal={Z. Angew. Math. Phys.},
      volume={73},
      number={6},
       pages={Paper No. 241, 10},
         url={https://doi.org/10.1007/s00033-022-01766-0},
      review={\MR{4502637}},
}

\bib{CHM11}{article}{
      author={Colombo, Rinaldo~M.},
      author={Herty, Michael},
      author={Mercier, Magali},
       title={Control of the continuity equation with a non local flow},
        date={2011},
        ISSN={1292-8119},
     journal={ESAIM Control Optim. Calc. Var.},
      volume={17},
      number={2},
       pages={353\ndash 379},
         url={https://doi.org/10.1051/cocv/2010007},
      review={\MR{2801323}},
}

\bib{CL12-a}{article}{
      author={Colombo, Rinaldo~M.},
      author={L\'{e}cureux-Mercier, Magali},
       title={An analytical framework to describe the interactions between
  individuals and a continuum},
        date={2012},
        ISSN={0938-8974},
     journal={J. Nonlinear Sci.},
      volume={22},
      number={1},
       pages={39\ndash 61},
         url={https://doi.org/10.1007/s00332-011-9107-0},
      review={\MR{2878651}},
}

\bib{CL12-n}{article}{
      author={Colombo, Rinaldo~M.},
      author={L\'{e}cureux-Mercier, Magali},
       title={Nonlocal crowd dynamics models for several populations},
        date={2012},
        ISSN={0252-9602},
     journal={Acta Math. Sci. Ser. B (Engl. Ed.)},
      volume={32},
      number={1},
       pages={177\ndash 196},
         url={https://doi.org/10.1016/S0252-9602(12)60011-3},
      review={\MR{2921871}},
}

\bib{CMR16}{article}{
      author={Colombo, Rinaldo~M.},
      author={Marcellini, Francesca},
      author={Rossi, Elena},
       title={Biological and industrial models motivating nonlocal conservation
  laws: a review of analytic and numerical results},
        date={2016},
        ISSN={1556-1801},
     journal={Netw. Heterog. Media},
      volume={11},
      number={1},
       pages={49\ndash 67},
         url={https://doi.org/10.3934/nhm.2016.11.49},
      review={\MR{3461734}},
}

\bib{CL13}{article}{
      author={Crippa, Gianluca},
      author={L\'{e}cureux-Mercier, Magali},
       title={Existence and uniqueness of measure solutions for a system of
  continuity equations with non-local flow},
        date={2013},
        ISSN={1021-9722},
     journal={NoDEA Nonlinear Differential Equations Appl.},
      volume={20},
      number={3},
       pages={523\ndash 537},
         url={https://doi.org/10.1007/s00030-012-0164-3},
      review={\MR{3057143}},
}

\bib{CS21}{article}{
      author={Crippa, Gianluca},
      author={Stefani, Giorgio},
       title={An elementary proof of existence and uniqueness for the {E}uler
  flow in localized {Y}udovich spaces},
        date={2021},
        note={Preprint, available at
  \href{https://arxiv.org/abs/2110.15648v2}{arXiv:2110.15648v2}},
}

\bib{DF13}{article}{
      author={Di~Francesco, Marco},
      author={Fagioli, Simone},
       title={Measure solutions for non-local interaction {PDE}s with two
  species},
        date={2013},
        ISSN={0951-7715},
     journal={Nonlinearity},
      volume={26},
      number={10},
       pages={2777\ndash 2808},
         url={https://doi.org/10.1088/0951-7715/26/10/2777},
      review={\MR{3105514}},
}

\bib{EHM16}{article}{
      author={Evers, Joep H.~M.},
      author={Hille, Sander~C.},
      author={Muntean, Adrian},
       title={Measure-valued mass evolution problems with flux boundary
  conditions and solution-dependent velocities},
        date={2016},
        ISSN={0036-1410},
     journal={SIAM J. Math. Anal.},
      volume={48},
      number={3},
       pages={1929\ndash 1953},
         url={https://doi.org/10.1137/15M1031655},
      review={\MR{3507552}},
}

\bib{KP17}{article}{
      author={Keimer, Alexander},
      author={Pflug, Lukas},
       title={Existence, uniqueness and regularity results on nonlocal balance
  laws},
        date={2017},
        ISSN={0022-0396},
     journal={J. Differential Equations},
      volume={263},
      number={7},
       pages={4023\ndash 4069},
         url={https://doi.org/10.1016/j.jde.2017.05.015},
      review={\MR{3670045}},
}

\bib{L22}{article}{
      author={La, Joonhyun},
       title={Regularity and drift by {O}sgood vector fields},
        date={2022},
        note={Preprint, available at
  \href{https://arxiv.org/abs/2206.14237v1}{arXiv:2206.14237v1}},
}

\bib{LL15}{article}{
      author={Li, Huaiqian},
      author={Luo, Dejun},
       title={A unified treatment for {ODE}s under {O}sgood and {S}obolev type
  conditions},
        date={2015},
        ISSN={0007-4497},
     journal={Bull. Sci. Math.},
      volume={139},
      number={1},
       pages={114\ndash 133},
         url={https://doi.org/10.1016/j.bulsci.2014.08.005},
      review={\MR{3312290}},
}

\bib{MKB14}{article}{
      author={Mackey, Alan},
      author={Kolokolnikov, Theodore},
      author={Bertozzi, Andrea~L.},
       title={Two-species particle aggregation and stability of co-dimension
  one solutions},
        date={2014},
        ISSN={1531-3492},
     journal={Discrete Contin. Dyn. Syst. Ser. B},
      volume={19},
      number={5},
       pages={1411\ndash 1436},
         url={https://doi.org/10.3934/dcdsb.2014.19.1411},
      review={\MR{3199785}},
}

\bib{pachpatte}{book}{
      author={Pachpatte, B.~G.},
       title={Inequalities for differential and integral equations},
      series={Mathematics in Science and Engineering},
   publisher={Academic Press, Inc., San Diego, CA},
        date={1998},
      volume={197},
        ISBN={0-12-543430-8},
      review={\MR{1487077}},
}

\bib{PR14}{article}{
      author={Piccoli, Benedetto},
      author={Rossi, Francesco},
       title={Generalized {W}asserstein distance and its application to
  transport equations with source},
        date={2014},
        ISSN={0003-9527},
     journal={Arch. Ration. Mech. Anal.},
      volume={211},
      number={1},
       pages={335\ndash 358},
         url={https://doi.org/10.1007/s00205-013-0669-x},
      review={\MR{3182483}},
}

\bib{R90}{article}{
      author={Rubinstein, Jacob},
       title={Evolution equations for stratified dilute suspensions},
        date={1990},
        ISSN={0899-8213},
     journal={Phys. Fluids A},
      volume={2},
      number={1},
       pages={3\ndash 6},
         url={https://doi.org/10.1063/1.857690},
      review={\MR{1030168}},
}

\bib{Z99}{article}{
      author={Zumbrun, Kevin},
       title={On a nonlocal dispersive equation modeling particle suspensions},
        date={1999},
        ISSN={0033-569X},
     journal={Quart. Appl. Math.},
      volume={57},
      number={3},
       pages={573\ndash 600},
         url={https://doi.org/10.1090/qam/1704419},
      review={\MR{1704419}},
}

\end{biblist}
\end{bibdiv}

\end{document}